\newcommand\eps{\varepsilon}
\newcommand\R{{\mathbb{R}}}
\newcommand\C{{\mathbb{C}}}
\theoremstyle{plain}
  \newtheorem{theorem}[subsection]{Theorem}
  \newtheorem{lemma}[subsection]{Lemma}
\theoremstyle{remark}
  \newtheorem{remark}[subsection]{Remark}
\theoremstyle{definition}
\begin{document}
\title[]{On characterization of the sharp Strichartz inequality for the Schr\"{o}dinger equation}
\author{Jin-Cheng Jiang}
\address{Department of Mathematics, National Tsing-Hua University, Hsinchu, Taiwan 30013, R.O.C}
\email{jcjiang@math.nthu.edu.tw}
\author{Shuanglin Shao}
\address{Department of Mathematics, University of Kansas, Lawrence, KS  66045}
\email{slshao@ku.edu}

\vspace{-1in}
\begin{abstract}
In this paper, we study the extremal problem for the Strichartz inequality for the Schr\"{o}dinger equation on the $\mathbb{R} \times \mathbb{R}^2$. We show that the solutions to the associated Euler-Lagrange equation are exponentially decaying in the Fourier space and thus can be extended to be complex analytic. Consequently we provide a new proof to the characterization of the extremal functions: the only extremals are Gaussian functions, which was investigated previously by Foschi \cite{Foschi:2007:maxi-strichartz-2d} and  Hundertmark-Zharnitsky \cite{Hundertmark-Zharnitsky:2006:maximizers-Strichartz-low-dimensions}.  
\end{abstract}
\date{\today}
\maketitle

\section{Introduction}

We begin with some notation. For a Schwarz function$f$ on $\R^d$, $d\ge 1$, define the Fourier transform,
$$\mathcal{F}(f)(\xi)= \widehat{f}(\xi) =\int_{\R^d} e^{-ix\cdot \xi} f(x)dx, \xi\in \R^d.$$
The inverse of the Fourier transform, 
$$\mathcal{F}^{-1}(f)(x)=f^\vee (x)=\frac {1}{(2\pi)^d}\int_{\R^d} e^{ix\cdot \xi} f(\xi)d\xi, x\in \R^d.$$  The linear Strichartz inequality for the Schr\"{o}dinger equation \cite{Keel-Tao:1998:endpoint-strichartz, Tao:2006-CBMS-book} asserts that 
\begin{equation}\label{eq-general-strichartz}
\|e^{it\Delta} f\|_{L^{2+\frac 4d}_{t,x}(\mathbb{R} \times \mathbb{R}^d)}\le C_d \|f\|_{L^2(\mathbb{R}^d)},
\end{equation}
where $e^{it\Delta}f (x) = \frac {1}{(2\pi)^d} \int_{\mathbb{R}^d} e^{ix\cdot \xi+it|\xi|^2} \hat{f}(\xi) d\xi. $ 
We specify $d=2$ and consider 
\begin{equation}\label{eq-strichartz}
\|e^{it\Delta} f\|_{L^4_{t,x}(\mathbb{R} \times \mathbb{R}^2)}\le {\bf R}\|f\|_{L^2(\mathbb{R}^2)}. 
\end{equation} 
where
\begin{equation}\label{eq-17}
{\bf R}:=\sup\left\{\frac {\|e^{it\Delta} f \|_{L^4_{t,x}(\R\times \R^2)}}{ \|f\|_{L^2(\R^2)}}: f\in L^2, \, f\neq 0 \right\}.
\end{equation}

We define an extremal function or extremal to \eqref{eq-strichartz} is a nonzero function $f\in L^2$ such that the inequality is optimized in the sense that 
\begin{equation} \|e^{it\Delta} f\|_{L^4_{t,x}(\mathbb{R} \times \mathbb{R}^2)}={\bf R}\|f\|_{L^2(\mathbb{R}^2)}. 
\end{equation}
The extremal problem of \eqref{eq-strichartz} concerns (i) Whether there exists an extremal function? (ii) How to characterize the extremal functions? What are the explicit forms of extremal functions?  Are they unique up to the symmetry of the inequality? 

From Foschi \cite{Foschi:2007:maxi-strichartz-2d} and Hundertmark-Zharnitsky \cite{Hundertmark-Zharnitsky:2006:maximizers-Strichartz-low-dimensions}, it is known that, the Gaussian functions are the only extremal functions to the linear Strichartz inequality \eqref{eq-strichartz} for the dimensions $d=1,2$. Here Gaussian functions, $\mathbb{R}^d\to \mathbb{C}$, $d=1,2$, are of the form 
$$e^{A |x|^2+ B\cdot x + C},$$
with $A, C \in \mathbb{C}, B\in \mathbb{C}^d$ and the real part of $A$, $\Re (A)<0$. The existence of extremisers was established previously by Kunze \cite{Kunze:2003:maxi-strichartz-1d} for the Strichartz inequality \eqref{eq-general-strichartz} when $d=1$. When $d\ge 3$, existence of extremisers is proved by the second author \cite{Shao:2009:maximizers-schrodinger} . 

In this note, we are interested in the problem of how to characterize extremals for \eqref{eq-strichartz} via the study of the associated Euler-Lagrange equation. We show that the solutions of this generalized Euler-Lagrange equation enjoy a fast decay in the Fourier space and thus can be extended to be complex analytic, see Theorem \ref{thm-complex-analyticity}. Then as an easy consequence, we give an alternative proof that all extremal functions to \eqref{eq-strichartz} are Gaussians based on solving a functional equation of extremizers derived in Foschi \cite{Foschi:2007:maxi-strichartz-2d}, see \eqref{eq-multiplicative} and Theorem \ref{thm-quadratic}. Indeed, in the proof given below we use the information that $f$ is twice continuously differentiable, i.e., $f\in C^2$, which can be lowered to continuity by a more refined argument. The functional inequality \eqref{eq-multiplicative} is a key ingredient in Foschi's proof in \cite{Foschi:2007:maxi-strichartz-2d}. To prove $f$ in \eqref{eq-multiplicative} to be a Gaussian function, local integrability of $f$ is assumed in \cite{Foschi:2007:maxi-strichartz-2d}, which is further reduced to measurable functions in Charalambides \cite{Charalambides:2012:Cauchy-Pexider}.

Let $f$ be an extremal function to \eqref{eq-strichartz} with the constant ${\bf R}$. Then $f$ satisfies the following generalized Euler-Lagrange equation,
\begin{equation}\label{eq-strichartzequ}
\omega \langle g, f \rangle = \mathcal{Q}(g,f,f,f), \text{for all }g \in L^2,
\end{equation}
where $\omega= \mathcal{Q}(f,f,f,f)/\|f\|_{L^2}^2 >0$ and $\mathcal{Q}(f_1,f_2,f_3,f_4)$ is the integral
\begin{equation}\label{eq-21} 
\begin{split}
& \int_{(\R^2)^4} \overline{\widehat{f_1}}(\xi_1) \overline{\widehat{f_2}}(\xi_2)  \widehat{f_3}(\xi_3)  \widehat{f_4}(\xi_4) \delta (\xi_1 + \xi_2 -\xi_3-\xi_4) \times \\
&\qquad\qquad \qquad \times \delta(|\xi_1|^2 + |\xi_2|^2 -|\xi_3|^2- |\xi_4|^2) d\xi_1 d\xi_2 d\xi_3 d\xi_4,
\end{split}
\end{equation}
for $f_i \in L^2(\R^2)$, $1\le i\le 4$, $\delta (\xi) = (2\pi)^{-d} \int_{\mathbb{R}^d} e^{i \xi \cdot x}dx$ in the distribution sense, $d=1,2$. The proof of \eqref{eq-strichartzequ} is standard; see e.g.  \cite[p. 489]{Evans:2010:PDE-book} or \cite[Section 2]{Hundertmark-Lee:2012:nonlocal-variational-problems-nonlinear-optics} for similar derivations of Euler-Lagrange equations. 

\begin{theorem}\label{thm-complex-analyticity}
If $f$ solves the generalized Euler-Lagrange equation \eqref{eq-strichartzequ} for some $\omega>0$, then there exists $\mu>0$ such that 
$$ e^{\mu |\xi|^2} \widehat{f} \in L^2(\mathbb{R}^2). $$
Furthermore $f$ can be extended to be complex analytic on $\C^2$.  
\end{theorem}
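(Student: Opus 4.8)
The plan is to work entirely on the Fourier side with $F:=\widehat f$ and to begin from the pointwise form of the Euler--Lagrange equation. Testing \eqref{eq-strichartzequ} against an approximate identity (so that $g$ concentrates at a point) converts it into
\[
\begin{aligned}
\omega F(\xi_1)=\int_{(\R^2)^3}\overline{F}(\xi_2)F(\xi_3)F(\xi_4)\,
&\delta(\xi_1+\xi_2-\xi_3-\xi_4)\\
&\times\delta(|\xi_1|^2+|\xi_2|^2-|\xi_3|^2-|\xi_4|^2)\,d\xi_2\,d\xi_3\,d\xi_4 ,
\end{aligned}
\]
valid for a.e.\ $\xi_1$. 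Before iterating it I would run a regularity bootstrap: writing the right-hand side through the bilinear convolutions $(Fd\sigma)*(Fd\sigma)$ on the paraboloid and invoking the $L^2$ bilinear estimate (equivalent to the $L^4$ Strichartz inequality \eqref{eq-strichartz}), one shows the right-hand side is a bounded continuous function, so that $F\in L^\infty\cap C^0$, the identity holds everywhere, and hence $|F(\xi_1)|\le \Phi[|F|](\xi_1)$, where $\Phi[A](\xi_1):=\omega^{-1}\int A(\xi_2)A(\xi_3)A(\xi_4)\,\delta\delta$ and $\delta\delta$ denotes the two delta factors above.

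The engine of the argument is an algebraic feature of the constraint set. On $\{\xi_1+\xi_2=\xi_3+\xi_4,\ |\xi_1|^2+|\xi_2|^2=|\xi_3|^2+|\xi_4|^2\}$ the Gaussian weight $w_s(\xi):=e^{s|\xi|^2}$ is \emph{exactly} multiplicative, $w_s(\xi_1)w_s(\xi_2)=w_s(\xi_3)w_s(\xi_4)$; in particular $w_s(\xi_1)\le w_s(\xi_3)w_s(\xi_4)$ since $|\xi_1|^2\le|\xi_3|^2+|\xi_4|^2$. Parametrizing the constraint set by the observation that $\{\xi_1,\xi_2\}$ and $\{\xi_3,\xi_4\}$ are antipodal on a common circle of radius $|\xi_1-\xi_2|/2$ centered at $(\xi_1+\xi_2)/2$, one computes directly that $\Phi$ reproduces Gaussians, $\Phi[e^{-c|\cdot|^2}]=\frac{\pi^2}{4c\omega}e^{-c|\cdot|^2}$, so centered Gaussians are eigenfunctions of $\Phi$. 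This is the quantitative reason to expect $|F|$ to be bounded by a Gaussian, and it is what distinguishes the Gaussian weight from all others.

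I would then reduce the theorem to the pointwise bound $|F(\xi)|\le Ce^{-c|\xi|^2}$, which immediately yields $e^{\mu|\xi|^2}F\in L^2$ for every $\mu<c$. To obtain it I would iterate $|F|\le\Phi[|F|]$ starting from the decay produced by the bootstrap: the eigenfunction identity lets one set up a Gaussian supersolution $A=Ce^{-c|\cdot|^2}$ satisfying $\Phi[A]\le A$ (by taking the size $C$ small relative to $\omega$), while the multiplicativity $w_s(\xi_1)\le w_s(\xi_3)w_s(\xi_4)$ transfers the weight through $\Phi$ with no loss in the exponential rate. To keep every quantity finite during the iteration I would work with the truncated weight $w_{s,N}=e^{s\min(|\xi|^2,N)}$, which remains sub-multiplicative on the constraint set because $t\mapsto\min(t,N)$ is concave and vanishes at $0$, derive bounds for $w_{s,N}F$ that are uniform in $N$, and pass to the limit $N\to\infty$ by monotone convergence.

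The main obstacle is that the trilinear estimate controlling $\Phi$ is scale-invariant, being dual to the scale-invariant inequality \eqref{eq-strichartz}; consequently no purely soft weighted $L^2$ bound can close, as the naive estimate only gives $\omega\|w_sF\|_2\lesssim\|w_sF\|_2^2\,\|F\|_2$, a quadratic inequality yielding no upper bound. Overcoming this is precisely where the \emph{exact} multiplicativity $w_s(\xi_1)w_s(\xi_2)=w_s(\xi_3)w_s(\xi_4)$ and the eigenfunction computation must replace mere size estimates, together with the geometric fact that producing a high output frequency $\xi_1$ forces $\max(|\xi_3|,|\xi_4|)\gtrsim|\xi_1|$, so that high frequencies are generated only from high-frequency input; trapping $|F|$ beneath the Gaussian supersolution across this scaling-critical step is the crux of the proof. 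Once $|F(\xi)|\le Ce^{-c|\xi|^2}$ is in hand, analytic continuation is routine: for any $z\in\C^2$ Cauchy--Schwarz gives $\int e^{|\operatorname{Im}z|\,|\xi|}|F(\xi)|\,d\xi\le \|e^{\mu|\xi|^2}F\|_2\,\|e^{|\operatorname{Im}z|\,|\xi|-\mu|\xi|^2}\|_2<\infty$, so $f(z)=\frac{1}{(2\pi)^2}\int e^{iz\cdot\xi}F(\xi)\,d\xi$ converges locally uniformly and defines an entire function on $\C^2$.
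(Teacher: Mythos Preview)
Your outline assembles several correct ingredients --- the pointwise Euler--Lagrange identity, the sub-multiplicativity of a truncated Gaussian weight on the constraint set, and the fact that centered Gaussians are eigenfunctions of the trilinear operator $\Phi$ --- but the step you yourself flag as ``the crux'' is never carried out, and the mechanism you propose for it does not close. Concretely: from $|F|\le\Phi[|F|]$ and a supersolution $\Phi[A]\le A$ you cannot conclude $|F|\le A$; monotonicity of $\Phi$ only gives $|F|\le\Phi[|F|]\le\Phi[A]\le A$ \emph{once you already know} $|F|\le A$, which is the statement to be proved. Iterating $|F|\le\Phi^n[|F|]$ likewise goes in the wrong direction. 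You correctly diagnose that the naive weighted $L^2$ estimate $\omega\|w_sF\|_2\lesssim\|w_sF\|_2^2\|F\|_2$ is scale-invariant and gives no upper bound, but then offer only a qualitative remark (``high frequencies are generated only from high-frequency input'') in place of an argument that breaks this invariance. The eigenfunction identity and exact multiplicativity, while pretty, do not by themselves supply the missing smallness.

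The paper closes exactly this gap with two ideas you do not use. First, a \emph{quantitative} bilinear refinement (Bourgain's estimate): when one input is supported at frequencies $\le s$ and another at frequencies $\ge Ns$, the multilinear form gains a factor $N^{-1/2}$. Second, a continuity argument in a regularization parameter: with the bounded weight $F_{\mu,\epsilon}(\xi)=\mu|\xi|^2/(1+\epsilon|\xi|^2)$, a high/medium/low decomposition together with the bilinear gain yields an inequality of the shape $\omega x(\epsilon)\le o(1)\,x(\epsilon)+Cx(\epsilon)^2+Cx(\epsilon)^3+o(1)$, where $x(\epsilon)=\|e^{F_{\mu,\epsilon}}\widehat f_{>}\|_2$ and the $o(1)$ terms come from $s^{-1/2}$ and from $\|\widehat f\,1_{s\le|\xi|\le s^2}\|_2\to 0$. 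Since $x(\epsilon)$ is continuous in $\epsilon$ and small for large $\epsilon$, concavity traps it below the small root uniformly in $\epsilon$, and monotone convergence as $\epsilon\to 0$ gives $e^{\mu|\xi|^2}\widehat f\in L^2$. Your truncation $w_{s,N}$ could play the role of $\epsilon$, but without the bilinear gain and the continuity/trapping step the argument remains incomplete.
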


To prove this theorem, we follow the argument in \cite{Hundertmark-Shao:Analyticity-of-extremals-Airy-Strichartz}. Similar reasoning has appeared previously in \cite{Erdogan-Hundertmark-Lee:2011:exponential-decay-of-dispersion-management-soliton,Hundertmark-Lee:2009:decay-smoothness-for-solutions-dispersion-managed-NLS}. It relies on a multilinear weighted Strichartz estimate and a continuity argument. See Lemma \ref{le-multilinear} and Lemma \ref{le-bootstrap}. 

Next we prove that the extremals to \eqref{eq-strichartz} are Gaussian functions. We start with the study of the functional equation derived in \cite{Foschi:2007:maxi-strichartz-2d}. In \cite{Foschi:2007:maxi-strichartz-2d}, the functional equation reads
\begin{equation}\label{eq-multiplicative}
f(x)f(y) = f(w)f(z),
\end{equation}
for any $x,y,w,z \in \mathbb{R}^2$ such that 
\begin{equation}\label{eq-parallelgram}
x+y = w+ z, \quad |x|^2 + |y|^2 = |w|^2 + |z|^2,
\end{equation}
Note that $x, y, w, z$ in $\mathbb{R}^2$ satisfy the relation \eqref{eq-parallelgram} if and only if these four points form a rectangle in $\mathbb{R}^2$ with vertices $x,y,w$ and $z$.  Indeed, by \eqref{eq-parallelgram}, these four points $x, y ,w$ and $z$ form a parallelogram on $\mathbb{R}^2$ and $x\cdot y = w \cdot z$ . Secondly $w-x$ is perpendicular to $z-x$ since $(w-x) \cdot (z-x) = w\cdot z - w \cdot x - x \cdot z + |x|^2 = w\cdot z -(x+y) \cdot x + |x|^2 = w \cdot z - y \cdot x =0.$ This proves that $x,y,w$ and $z$ form a rectangle on $\mathbb{R}^2$. In \cite{Foschi:2007:maxi-strichartz-2d}, it is proven that $f\in L^2$ satisfies \eqref{eq-multiplicative} if and only if $f$ is an extremal function to \eqref{eq-strichartz}. Basically, this comes from two aspects. One is that in the Foschi's proof of the sharp Strichartz inequality only the Cauchy-Schwarz inequality is used at one place besides equality. So the equality in the Strichartz inequality \eqref{eq-strichartz}, or equivalently the equality in Cauchy-Schwarz,  yields the same functional equation as \eqref{eq-multiplicative} where $f$ is replaced by $\hat{f}$. The other one is that the Strichartz norm for the Schr\"odinger equation enjoys an identity that \begin{equation}\label{eq-symmetry}
\|e^{it\Delta}f \|_{L^4(\mathbb{R}\times \mathbb{R}^2)} =C  \|e^{it\Delta}f^\vee \|_{L^4(\mathbb{R}\times \mathbb{R}^2)}
\end{equation}for some $C>0$.

In \cite{Foschi:2007:maxi-strichartz-2d}, Foschi is able to show that all the solutions to \eqref{eq-multiplicative} are Gaussians under the assumption that $f$ is a locally integral function. This can be viewed as an investigation of the Cauchy functional equation \eqref{eq-multiplicative} for functions supported on the paraboloids. To characterize the extremals for the Tomas-Stein inequality for the sphere in $\mathbb{R}^3$, in \cite{Christ-Shao:extremal-for-sphere-restriction-II-characterizations},  Christ and the second author study the same functional equation \eqref{eq-multiplicative} for functions supported on the sphere and prove that they are exponentially affine functions.  In \cite{Charalambides:2012:Cauchy-Pexider}, Charalambides generalizes the analysis in \cite{Christ-Shao:extremal-for-sphere-restriction-II-characterizations} to some general hyper-surfaces in $\mathbb{R}^n$ that include the sphere, paraboloids and cones as special examples and proves that the solutions are exponentially affine functions. In \cite{Charalambides:2012:Cauchy-Pexider, Christ-Shao:extremal-for-sphere-restriction-II-characterizations}, the functions are assumed to be measurable functions.  

By the analyticity established in Theorem \ref{thm-complex-analyticity}, Equations \eqref{eq-multiplicative} and \eqref{eq-parallelgram} have the following easy consequence, which recovers the result in \cite{Foschi:2007:maxi-strichartz-2d, Hundertmark-Zharnitsky:2006:maximizers-Strichartz-low-dimensions}. 

\begin{theorem}\label{thm-quadratic}
Suppose that $f$ is an extremal function to \eqref{eq-strichartz}. Then 
\begin{equation}\label{eq-quadratic}
f(x) = e^{A |x|^2+ B \cdot x + C},
\end{equation} 
where $A, C \in \mathbb{C}, B \in \mathbb{C}^2$ and $\Re(A)<0$. 
\end{theorem}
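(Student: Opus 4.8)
The plan is to combine the analyticity furnished by Theorem \ref{thm-complex-analyticity} with the functional equation \eqref{eq-multiplicative} and to solve the latter by reducing it to a pair of linear second-order PDE. First I would record that an extremal $f$ for \eqref{eq-strichartz} solves the generalized Euler-Lagrange equation \eqref{eq-strichartzequ}, so Theorem \ref{thm-complex-analyticity} applies: $f$ extends to an entire function on $\C^2$ and in particular is real-analytic and of class $C^2$ on $\R^2$, with $f\not\equiv 0$; moreover $f$ satisfies \eqref{eq-multiplicative}--\eqref{eq-parallelgram}. The first step is to rewrite the constraint \eqref{eq-parallelgram} in a usable form: setting $c=(x+y)/2=(w+z)/2$ and $x=c+u$, $y=c-u$, $w=c+v$, $z=c-v$, the two conditions in \eqref{eq-parallelgram} hold precisely when $|u|=|v|$. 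Thus \eqref{eq-multiplicative} becomes
$$ f(c+u)\,f(c-u) = f(c+v)\,f(c-v) \qquad \text{whenever } |u|=|v|, $$
so the product $f(c+u)f(c-u)$ depends only on $c$ and $|u|$.

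Next I would localize and pass to logarithms. Since $f\not\equiv 0$ is analytic, fix $x_0$ with $f(x_0)\neq 0$ and a ball $B$ about $x_0$ on which $f$ is nonvanishing, and define an analytic branch $g=\log f$ on $B$. For $c\in B$ and $u,v$ small with $|u|=|v|$, the displayed identity reads $e^{g(c+u)+g(c-u)}=e^{g(c+v)+g(c-v)}$; the resulting additive $2\pi i$ ambiguity is locally constant in $(u,v)$ and vanishes in the degenerate case $u=v=0$, where both sides equal $2g(c)$, hence
$$ g(c+u)+g(c-u) = g(c+v)+g(c-v). $$
Therefore $\phi_c(u):=g(c+u)+g(c-u)$ is rotation invariant in $u$. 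Being analytic and even in $u$, its Hessian at $u=0$ must commute with every rotation and so be a scalar multiple of the identity; as this Hessian equals $2\,\bigl(\partial_i\partial_j g(c)\bigr)_{ij}$, I obtain for every $c\in B$
$$ \partial_1^2 g = \partial_2^2 g, \qquad \partial_1\partial_2 g = 0. $$

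Solving this system is then elementary: $\partial_1\partial_2 g=0$ forces $\partial_1 g=\chi(x_1)$ and $\partial_2 g=\psi(x_2)$, whence $\partial_1^2 g=\chi'(x_1)$ and $\partial_2^2 g=\psi'(x_2)$; equating these and noting that one side depends only on $x_1$ and the other only on $x_2$ shows both equal a common constant $2A\in\C$. Integrating gives $g(x)=A|x|^2+B\cdot x+C$ on $B$, i.e. $f=e^{A|x|^2+B\cdot x+C}$ on $B$. Both sides are entire and agree on the open set $B$, so the identity theorem yields \eqref{eq-quadratic} throughout $\C^2$, hence on $\R^2$. Finally, since $|f(x)|^2=e^{2\Re(A)|x|^2+2\Re(B\cdot x)+2\Re C}$ must be integrable over $\R^2$, the leading Gaussian factor forces $\Re(A)<0$, completing the proof.

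The only genuinely delicate point is the passage to logarithms: one must know $f$ is locally nonvanishing (guaranteed since an analytic $f\not\equiv 0$ has an isolated, measure-zero zero set) and must choose the branch of $\log$ so that the product identity linearizes with no spurious $2\pi i$ term. Working in a small ball around a point where $f\neq 0$ and propagating from the degenerate case $u=v=0$ by continuity settles this; the global statement is then pure analytic continuation. Everything else---the reduction of \eqref{eq-parallelgram}, the Hessian computation, and the integration of the PDE system---is routine and uses only $f\in C^2$, with analyticity needed solely for the final extension step.
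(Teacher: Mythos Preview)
Your proof is correct. Both your argument and the paper's use the $C^2$ regularity supplied by Theorem~\ref{thm-complex-analyticity} to take a local logarithm and differentiate the functional equation \eqref{eq-multiplicative} twice, but the executions differ. You recast the constraint \eqref{eq-parallelgram} as $|u|=|v|$ about a common center $c$, observe that $u\mapsto g(c+u)+g(c-u)$ is radial, and read off from rotation invariance that the Hessian $\nabla^2 g(c)$ must be a scalar matrix, yielding the system $\partial_1^2 g=\partial_2^2 g$, $\partial_1\partial_2 g=0$ at once. The paper instead works with two explicit rectangles: first (implicitly, via the rectangle $(x_1,x_2),(0,0),(x_1,0),(0,x_2)$) it separates $\log f(x_1,x_2)=\phi(x_1)+\psi(x_2)+\text{const}$, and then uses the rectangle $(x_1,x_2),(x_2,-x_1),(x_1+x_2,x_2-x_1),(0,0)$ and differentiates in $x_1$ and $x_2$ to conclude $\phi''=\psi''$ is constant. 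Your route is more coordinate-free and bypasses the separation-of-variables step; the paper's is a bit more hands-on with specific configurations. On the globalization side you need only one point where $f\neq 0$ and then invoke analytic continuation of $f$ itself, whereas the paper appeals to the nowhere-vanishing result (\cite[Lemma~7.13]{Foschi:2007:maxi-strichartz-2d}) so as to work with $\log f$ in a neighborhood of every point of $\R^2$.
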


Let $f$ be an extremal function to \eqref{eq-strichartz}. Then by Theorem \ref{thm-complex-analyticity}, $f$ is continuous. This, together with \eqref{eq-multiplicative} and $\eqref{eq-parallelgram}$, implies that any nontrivial $f$ is nowhere vanishing on $\mathbb{R}^2$, see e.g. \cite[Lemma 7.13]{Foschi:2007:maxi-strichartz-2d}. For any $a\in \R^2$,  there is a disk $D(a,r) \subset \C^2$, $r>0$, such that $f$ is $C^2$ by Theorem \ref{thm-complex-analyticity} and $f$ is nowhere vanishing. Then $\log f$ is $C^2$ on $D(a,r)$, see e.g. \cite[Lemma 6.1.9]{Krantz:1992:complex-analysis-several-variables}.  Similar claims can be made for $\log f^2$.  Then up to a multiple of $2\pi$, 
$$ \log f^2(a) = \log f(a)+ \log f(a). $$
After restriction to $\R^2$, $f$ satisfies the equation \eqref{eq-multiplicative} for $x,y,w,z$ satisfying \eqref{eq-parallelgram}.  So by taking $r$ sufficiently small, 
$$\log f(x) + \log f(y) = \log f(w)+\log f (z)$$
for $x,y,w, z$ in $B(a,r)\subset \R^2$ and related as in \eqref{eq-parallelgram}. Since $\log f$ is twice differentiable, it is not hard to see that $\log f$ is a quadratic polynomial on $B(a,r)$. So $\log f $ is a quadratic polynomial on $\R^2$. Indeed, Let $a=0$ and $\phi(x_1)= \log f(x_1,0), \psi(0,x_2) = \log f(0,x_2)$. Then since the four points $(x_1,x_2), (x_2,-x_1) $ and $(x_1+x_2,x_2-x_1), (0,0)$ satisfy \eqref{eq-parallelgram}, we see that 
$$[\phi(x_1)+\psi(x_2)]+[\phi(x_2)+\psi(-x_1)] = [\phi(x_1+x_2)+\psi(x_2-x_1)] +\log f (0,0).$$
By taking differentiation firstly in $x_1$ and then in $x_2$, we see that $\phi '' = \psi ''$ is a constant. Thus $f$ is a quadratic polynomial. It is easy to see that this argument generalizes to any $a\in \R^2$.

{\bf Acknowledgement.} The research of the first author is supported by National Science Council Grant NSC 102-2115-M-007-0101-MY 2. The second author is supported in part by the NSF grant DMS-1160981. The authors would like to thank the anonymous referee for helpful comments and suggestions, which have been incorporated into this paper. 

 \section{Complex Analyticity}\label{sec:complex-analyticity}
In this section, we show that the solutions to the generalized Euler-Lagrange equation \eqref{eq-strichartzequ} can be extended to be complex analytic. 
 
We define
\begin{align*}
 \eta &: = (\eta_1, \eta_2, \eta_3, \eta_4 ) \in (\R^2)^4, \\
 a(\eta) & := \eta_1 + \eta_2 -\eta_3 -\eta_4, \\
 b(\eta ) & := |\eta_1|^2+ |\eta_2|^2 - |\eta_3|^2- |\eta_4|^2.  
\end{align*}
Let  $\eps\ge 0$ and $ \mu\ge 0$. For $\xi\in \R^2$, define
\begin{equation}
F(\xi) := F_{\mu, \eps} (\xi) = \frac {\mu |\xi|^2}{1+\eps |\xi|^2}. 
\end{equation} 
Define the weighted multilinear integral, for $h_i \in L^2(\mathbb{R}^2)$, $1\le i\le 4$, 
\begin{equation}\label{eq-22}
M_F(h_1, h_2, h_3, h_4) :=  \int_{(\R^2)^4} e^{F(\eta_1) - \sum_{j=2}^4 F(\eta_j)} \Pi_{j=1}^4 |h(\eta_j)| \delta\bigl( a(\eta)\bigr) \delta\bigl(b(\eta)\bigr) d\eta.
\end{equation}
The multilinear estimate we need shows the weak interaction of Schr\"{o}dinger waves between the high and low frequency.  More precisely,
\begin{lemma}\label{le-multilinear}
Let $h_i\in L^2(\R^2)$, $1\le i\le 4$, and $s>1$ be a large number. The Fourier transforms of $h_1, h_2$ are supported in $\{\xi: \, |\xi| \le s\}$,  and $\{\xi: \, |\xi|\ge Ns \}$ with $N >1$ being a large number, respectively. Then 
\begin{equation}\label{eq-23}
M_F(h_1, h_2, h_3, h_4)  \le C N^{-1/2} \Pi_{j=1}^4 \|h_j\|_{L^2}. 
\end{equation} 
\end{lemma}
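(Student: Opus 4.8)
The plan is to reduce \eqref{eq-23} to the standard frequency-separated bilinear Strichartz estimate, after first disposing of the exponential weight. Throughout I replace each $h_j$ by the function $g_j$ defined through $\widehat{g_j}:=|\widehat{h_j}|$; this changes neither the integrand of $M_F$ nor the norms $\|g_j\|_{L^2}=\|h_j\|_{L^2}$, so it suffices to work with nonnegative Fourier amplitudes. The weight is harmless on the constraint set: since $F=F_{\mu,\eps}$ depends only on $|\xi|$ and is nondecreasing in $|\xi|$, and since on the support of the integrand we have $|\eta_1|\le s\le Ns\le|\eta_2|$, it follows that $F(\eta_1)\le F(\eta_2)$; as $F\ge 0$,
$$ F(\eta_1)-\sum_{j=2}^4 F(\eta_j)\le F(\eta_1)-F(\eta_2)\le 0, $$
so that $e^{F(\eta_1)-\sum_{j=2}^4 F(\eta_j)}\le 1$ pointwise on $\{a(\eta)=0,\,b(\eta)=0\}$. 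Hence it is enough to bound the unweighted integral $\int \prod_{j=1}^4|\widehat{g_j}(\eta_j)|\,\delta(a(\eta))\delta(b(\eta))\,d\eta$.

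Next I rewrite this constrained integral in physical spacetime. Performing the $x$- and $t$-integrations that generate the two delta functions, exactly as in the identity relating the form $\mathcal{Q}$ in \eqref{eq-21} to the Strichartz functional, gives for some $c>0$
$$ \int_{(\R^2)^4}\prod_{j=1}^4|\widehat{g_j}(\eta_j)|\,\delta(a(\eta))\delta(b(\eta))\,d\eta = c\int_{\R\times\R^2}\overline{e^{it\Delta}g_1}\,\,\overline{e^{it\Delta}g_2}\,\,e^{it\Delta}g_3\,\,e^{it\Delta}g_4\,dx\,dt. $$
The left side is real and nonnegative, so it equals the modulus of the right side, which I bound by the integral of the modulus of the integrand. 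Pairing the low frequency factor $g_1$ with the high frequency factor $g_2$ and applying the Cauchy--Schwarz inequality in $L^2_{t,x}$ yields
$$ \le c\,\bigl\|e^{it\Delta}g_1\cdot e^{it\Delta}g_2\bigr\|_{L^2_{t,x}}\,\bigl\|e^{it\Delta}g_3\cdot e^{it\Delta}g_4\bigr\|_{L^2_{t,x}}. $$

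It remains to estimate the two bilinear factors. For the second I would use the $L^4$ Strichartz inequality \eqref{eq-strichartz} with H\"older: $\|e^{it\Delta}g_3\cdot e^{it\Delta}g_4\|_{L^2}\le \|e^{it\Delta}g_3\|_{L^4}\|e^{it\Delta}g_4\|_{L^4}\lesssim \|g_3\|_{L^2}\|g_4\|_{L^2}$. All the gain comes from the first factor, where $\widehat{g_1}$ is supported in $\{|\xi|\le s\}$ and $\widehat{g_2}$ in $\{|\xi|\ge Ns\}$: the frequency-separated bilinear Strichartz estimate in $\R^2$ gives
$$ \bigl\|e^{it\Delta}g_1\cdot e^{it\Delta}g_2\bigr\|_{L^2_{t,x}}\lesssim \Bigl(\frac{s}{Ns}\Bigr)^{1/2}\|g_1\|_{L^2}\|g_2\|_{L^2}=N^{-1/2}\|g_1\|_{L^2}\|g_2\|_{L^2}. $$
Combining the last three displays and recalling $\|g_j\|_{L^2}=\|h_j\|_{L^2}$ produces \eqref{eq-23}.

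The main obstacle is this last bilinear estimate. To prove it I would pass to the spacetime Fourier transform of $e^{it\Delta}g_1\cdot e^{it\Delta}g_2$, which is the pushforward of $\widehat{g_1}\otimes\widehat{g_2}$ under $(\xi_1,\xi_2)\mapsto(\xi_1+\xi_2,\,|\xi_1|^2+|\xi_2|^2)$, apply Plancherel in $(t,x)$, and use the change of variables whose Jacobian on the sum paraboloid is $|2\xi_1-\zeta|=|\xi_1-\xi_2|$, where $\zeta=\xi_1+\xi_2$. The frequency gap forces $|\xi_1-\xi_2|\ge |\xi_2|-|\xi_1|\gtrsim Ns$, so this Jacobian is bounded below by $Ns$, while the support $\{|\xi_1|\le s\}$ confines the relevant portion of each fiber to measure $\lesssim s$; the ratio gives $(s/Ns)^{1/2}=N^{-1/2}$. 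This transversality computation is standard (see \cite{Tao:2006-CBMS-book}), so I would either cite it or include the short Jacobian estimate; the remaining steps are routine.
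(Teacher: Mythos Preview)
Your proof is correct and follows essentially the same route as the paper: bound the exponential weight by $1$, rewrite the resulting unweighted integral as a spacetime integral of four Schr\"odinger evolutions, apply Cauchy--Schwarz, and invoke the frequency-separated bilinear Strichartz estimate (which the paper cites from \cite{Bourgain:1998:refined-Strichartz-NLS}) together with the linear $L^4$ Strichartz bound. The one minor difference is in how the weight is controlled: you use the support hypotheses on $h_1,h_2$ to get $F(\eta_1)\le F(\eta_2)$ directly, whereas the paper uses only the $\delta$-constraints (via $|\eta_1|^2\le\sum_{j\ge2}|\eta_j|^2$ and a subadditivity property of $x\mapsto x/(1+\eps x)$) to obtain the support-free inequality \eqref{eq-24}, which is then reused in Lemma~\ref{le-bootstrap} for terms lacking frequency separation.
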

\begin{proof}
The proof of this lemma needs the following two inequalities, 
\begin{equation}\label{eq-24}
M_F(h_1, h_2, h_3, h_4)  \le \int_{(\R^2)^4} \Pi_{j=1}^4 |h_j(\eta_j)|  \delta\bigl( a(\eta)\bigr) \delta\bigl(b(\eta)\bigr) d\eta;
\end{equation}
and 
\begin{equation}\label{eq-25}
\|e^{it\Delta} h_1 e^{it\Delta} h_2 \|_{L^2_{t,x}} \le C N^{-1/2} \|h_1\|_{L^2}\|h_2\|_{L^2}. 
\end{equation}
Together with the Cauchy-Schwarz inequality and the $L^2\to L^4 $ Strichartz inequality, the inequality \eqref{eq-23} follows from \eqref{eq-24} and \eqref{eq-25}.  Note that \eqref{eq-25} is established in \cite{Bourgain:1998:refined-Strichartz-NLS}. Thus it remains to establishing \eqref{eq-24}, where we follow \cite{Erdogan-Hundertmark-Lee:2011:exponential-decay-of-dispersion-management-soliton,Hundertmark-Shao:Analyticity-of-extremals-Airy-Strichartz}. 

On the support of $\eta$ determined by $\delta (a(\eta))$ and $\delta(b(\eta))$, we have 
\begin{align*}
\eta_1 + \eta_2 &= \eta_3+ \eta_4, \\
|\eta_1|^2+|\eta_2|^2 &= |\eta_3|^2+ |\eta_4|^2.
\end{align*}
Thus
$$|\eta_1|^2 \le |\eta_2|^2+ |\eta_3|^2+ |\eta_4|^2.$$
Since the function $x \to \frac {x}{1+ \epsilon x}$ is increasing on the interval $[0, \infty)$, we have 
$$\frac {|\eta_1|^2}{1+ \eps |\eta_1|^2} \le \frac {\sum_{j=2}^4 |\eta_j|^2}{1+\sum_{j=2}^4 \eps |\eta_j|^2} = \sum_{j=2}^4 \frac {|\eta_j|^2}{1+\sum_{j=2}^4 \eps |\eta_j|^2 } \le \sum_{j=2}^4\frac {|\eta_j|^2}{1+\eps |\eta_j|^2}. $$
This implies that $F(\eta_1) \le \sum_{j=2}^4 F(\eta_j)$ since $\mu \ge 0$.  Hence $$e^{F(\eta_1) - \sum_{j=2}^4 F(\eta_j)} \le 1. $$
Therefore \eqref{eq-24} follows by taking the absolute value in the integral. 
\end{proof}

If $f\in L^2$ satisfies the generalized Euler-Lagrange equation \eqref{eq-strichartzequ}, the following bootstrap lemma shows that $f$ gains certain regularity, namely, there is a constant $\mu>0$ depending on the function $f$, $e^{\mu|\xi|^2} \widehat{f}\in L^2. $ This is enough to conclude that $f$ can be extended to be complex analytic. 
\begin{lemma}\label{le-bootstrap}
If $f$ solves the generalized Euler-Lagrange equation \eqref{eq-strichartzequ} for some $\omega>0$ and $\|f\|_{L^2}=1$, then for $\widehat{f}_{>}:= \widehat{f} 1_{|\xi|\ge s^2}$ for $s>0$, there is a large constant $s\gg 1$ such that for $\mu= s^{-4}$, 
\begin{equation}\label{eq-27}
\omega \|e^{F(\cdot)} \widehat{f}_{>}\|_{L^2}\le o_1(1)  \|e^{F(\cdot)} \widehat{f}_{>}\|_{L^2} + C \|e^{F(\cdot)} \widehat{f}_{>}\|_{L^2}^2+C \|e^{F(\cdot)} \widehat{f}_{>}\|_{L^2}^3 + o_2(1),
 \end{equation}
 where $\lim_{s\to \infty}o _i(1)=0$ uniformly for all $\eps>0$, $i=1,2$, the constant $C>0$ is independent of $\eps$ and $s$. 
\end{lemma}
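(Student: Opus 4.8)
The plan is to test the Euler--Lagrange equation \eqref{eq-strichartzequ} against the function $g$ with $\widehat{g} = e^{2F}\widehat{f}_{>}/\|e^{F}\widehat{f}_{>}\|_{L^2}$. Since $\eps>0$ the weight $e^{2F}$ is bounded, so $g\in L^2$ is a legitimate test function. Write $X:=\|e^{F}\widehat{f}_{>}\|_{L^2}$ and assume $X>0$ (otherwise \eqref{eq-27} is trivial). Because $\widehat{f}_{>}$ is supported in $\{|\xi|\ge s^2\}$, the left-hand side $\omega\langle g,f\rangle$ collapses to $\omega X$ up to a fixed harmless Plancherel constant. For the right-hand side I would distribute the weight as $e^{2F(\xi_1)} = e^{F(\xi_1)-\sum_{j=2}^4 F(\xi_j)}\prod_{j=1}^4 e^{F(\xi_j)}$ inside $\mathcal{Q}(g,f,f,f)$ and take absolute values; this produces exactly the weighted multilinear integral \eqref{eq-22}, giving $|\mathcal{Q}(g,f,f,f)|\le M_F(\widetilde{H}_1,H,H,H)$, where $\widehat{\widetilde H}_1 = e^{F}\widehat{f}_{>}/X$ has norm $1$ and is supported in $\{|\xi|\ge s^2\}$, and $\widehat{H}=e^{F}\widehat{f}$.

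Next I would split $\widehat{f}=\widehat f\,1_{|\xi|\le s}+\widehat f\,1_{|\xi|>s}$ in the three $H$-slots, writing $H_{lo},H_{hi}$ for the pieces, and expand $M_F$ multilinearly. Two bookkeeping facts drive the estimate. First, on $\{|\xi|\le s\}$ one has $F\le \mu s^2=s^{-2}$, so $\|H_{lo}\|_{L^2}=O(1)$; and on the annulus $\{s<|\xi|<s^2\}$ one has $F\le \mu s^4=1$, so $\|e^{F}\widehat f\,1_{s<|\xi|<s^2}\|_{L^2}\le e\,(\int_{|\xi|>s}|\widehat f|^2)^{1/2}$, which tends to $0$ as $s\to\infty$ uniformly in $\eps$ since it is the $L^2$-tail of the fixed function $\widehat f$. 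Hence $\|H_{hi}\|_{L^2}\le X+o(1)$. Second, on the support cut out by $\delta(a)\delta(b)$ one has $|\eta_1|^2\le|\eta_3|^2+|\eta_4|^2$; as $|\eta_1|\ge s^2$ this is incompatible with $|\eta_3|,|\eta_4|\le s$, so every term in which slots $3$ and $4$ are both $H_{lo}$ vanishes. The surviving terms are then classified by how many of slots $2,3,4$ are $H_{hi}$.

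To estimate a surviving term I would follow the proof of Lemma \ref{le-multilinear}: bound the weight by $1$ via \eqref{eq-24}, apply Cauchy--Schwarz in $(t,x)$ to split into bilinear factors $\{1,2\}$ and $\{3,4\}$, apply the bilinear estimate \eqref{eq-25} to the factor containing a low ($|\xi|\le s$) and a high function, and the plain $L^4$ Strichartz bound to the other. When $H_{lo}$ sits in slot $2$ it is paired directly with the high slot $1$; when it sits in slot $3$ or $4$ its partner is only known to lie in $\{|\xi|>s\}$, so there I would first use $\delta(b)$ together with $|\eta_1|\ge s^2$ to localize the partner to $\{|\eta|\ge s^2/\sqrt 2\}$ on the effective support, and only then apply \eqref{eq-25} with $N\sim s$. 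Either way every term carrying at least one $H_{lo}$ gains a factor $N^{-1/2}=O(s^{-1/2})$, while the unique all-high term $M_F(\widetilde H_1,H_{hi},H_{hi},H_{hi})$ carries no gain but is bounded by $C\|H_{hi}\|_{L^2}^3\le C(X+o(1))^3$. Collecting by the power of $\|H_{hi}\|_{L^2}\le X+o(1)$: terms with one $H_{hi}$ give $o_1(1)X+o_2(1)$, terms with two give $CX^2+o_1(1)X+o_2(1)$, and the all-high term gives $CX^3+CX^2+o_1(1)X+o_2(1)$. Summing and combining with the left-hand side yields $\omega X\le o_1(1)X + CX^2 + CX^3 + o_2(1)$, which is \eqref{eq-27}; all $o_i(1)$ are uniform in $\eps$ because they come only from $s^{-1/2}$ and from the $\eps$-independent tail of $\widehat f$.

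The step I expect to be the main obstacle is the localization for terms in which the low frequency lies across the pair-partition from slot $1$: making rigorous that $\delta(b)$ forces the partner frequency to be at least $s^2/\sqrt2$ on the effective support so that \eqref{eq-25} applies there with gain, and simultaneously verifying that the all-high term contributes only the genuinely cubic quantity $(X+o(1))^3$ rather than a fixed constant. This is precisely where the decision to cut at frequency $s$ (so that the low part supplies the bilinear separation) must be reconciled with the definition of $X$ at the higher frequency $s^2$ (so that the intermediate annulus is absorbed into the small $L^2$-tail of $\widehat f$).
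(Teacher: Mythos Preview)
Your argument is correct and follows the same overall strategy as the paper: test \eqref{eq-strichartzequ} with a function whose Fourier transform is $e^{2F}\widehat f_{>}$, distribute the weight to produce the multilinear form $M_F$, and split by frequency to feed into Lemma~\ref{le-multilinear}. The difference is in the decomposition. The paper uses a \emph{three}-scale splitting $h=h_{<<}+h_{\sim}+h_{>}$ at $|\xi|=s$ and $|\xi|=s^2$, with $h=e^{F}\widehat f$; the intermediate annulus contributes $\|h_{\sim}\|_{L^2}\le e^{\mu s^4}\|f_{\sim}\|_{L^2}\to 0$. Because the case $(h_>,h_<,h_<)$ in slots $(2,3,4)$ is vacuous (on the support, $|\eta_3|^2+|\eta_4|^2=|\eta_1|^2+|\eta_2|^2\ge 2s^4$, contradicting $|\eta_3|,|\eta_4|\le s^2$), every surviving term in $A$ and $B_1$ has $h_<$ in slot~$2$; the paper then always splits \emph{slot~$2$} into $h_{<<}+h_{\sim}$ and applies \eqref{eq-25} only to the $\{1,2\}$ pair, where the separation $|\eta_1|\ge s^2$, $|\eta_2|<s$ is automatic from supports. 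Thus the paper never needs your $\delta(b)$-localization in slots $3,4$.

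Your two-scale variant trades the intermediate annulus for exactly the manoeuvre you flag as the obstacle, and your fix is correct: since $|\eta_3|^2=|\eta_1|^2+|\eta_2|^2-|\eta_4|^2\ge s^4-s^2$ whenever $|\eta_1|\ge s^2$ and $|\eta_4|\le s$, one may insert $1_{\{|\eta_3|\ge s^2/\sqrt{2}\}}$ into the integrand \emph{before} Cauchy--Schwarz; this cutoff then becomes part of the function in the $\{3,4\}$ bilinear factor, and \eqref{eq-25} applies with $N\sim s$. With that, your accounting $\|H_{hi}\|_{L^2}\le X+o(1)$ and the expansion $(X+o(1))^k$ produce precisely the terms in \eqref{eq-27}. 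Either route yields the lemma; the paper's three-scale version simply sidesteps the cross-pair localization.
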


\begin{proof}
Define $h(\xi) = e^{F(\xi)} \widehat{f}(\xi) $ and $h_>(\xi) = e^{F(\xi)} \widehat{f}_{>}$, where $\widehat{f}_{>} = \widehat{f}1_{|\xi| \ge s^2}$.  Let $P$ denote the symbol of differentiation of $ -i\partial_x$; under the Fourier transform, $\widehat{P} = |\xi|$. Correspondingly, we write $F(P)$ with the Fourier symbol $\frac {\mu |\xi|^2}{1+\eps |\xi|^2}$. 

We expand 
\begin{equation*}
 \|e^{F(\cdot)} \widehat{f}_{>}\|_{L^2}^2 =  \langle e^{F(\cdot)} \widehat{f}_{>}, e^{F(\cdot)} \widehat{f}_{>} \rangle = 
  \langle e^{2F(\cdot)} \widehat{f}_{>}, \widehat{f} \rangle = \langle e^{2F(P)}f_>, f \rangle.  
\end{equation*}
Thus in the generalized Euler-Lagrange \eqref{eq-strichartzequ}, setting $g = e^{2F(P)}f_{>}$, we see that 
\begin{equation}\label{eq-28}
\omega  \|e^{F(P)} f_{>}\|_{L^2}^2 = Q (e^{2F(P)} f_>, f,f,f).
\end{equation}
Since $\widehat{f}= e^{-F(\xi)} h$ and $e^{2F(\xi)} \widehat{f}_> = e^{F(\xi)} h_>$, 
\begin{align*}
&Q (e^{2F(P)} f_>, f,f,f) = \int_{(\R^2)^4} e^{2F(\xi_1)} \overline{\widehat{f}_>}(\xi_1) 
\overline{\widehat{f}_>}(\xi_2) \widehat{f}(\xi_3) \widehat{f}_4(\xi_4) \delta (a(\xi)) \delta (b(\xi)) d\xi \\
&=\int_{(\R^2)^4}\overline{e^{F(\xi_1)} h_>(\xi_1)}\, \overline{e^{-F(\xi_2)} h(\xi_2)} e^{-F(\xi_3)} h(\xi_3)  e^{-F(\xi_4)} h(\xi_4) \delta (a(\xi)) \delta (b(\xi)) d\xi \\
&= \int_{(\R^2)^4} e^{F(\xi_1) - \sum_{j=2}^4 F(\xi_j)} h_>(\xi_1) h(\xi_2) h(\xi_3) h(\xi_4) \delta (a(\xi)) \delta (b(\xi))  d\xi,
\end{align*}
where  $a(\xi)= \xi_1+\xi_2-\xi_3-\xi_4$ and $b(\xi) = |\xi_1|^2+|\xi_2|^2-|\xi_3|^2-|\xi_4|^2$ for $\xi= (\xi_1,\xi_2,\xi_3,\xi_4)\in \bigl(\R^2\bigr)^4$. Thus
\begin{equation}\label{eq-29}
\omega   \|e^{F(P)} f_{>}\|_{L^2}^2 \le M_F(h_>, h, h, h).
\end{equation}
Define 
\begin{equation*}
h_{\sim} = h1_{s\le |\xi| \le s^2},  h_{<<} = h1_{|\xi| < s} \text{ and } h_<= h_{<<}+ h_{\sim}. 
\end{equation*}
We split the integral $M_F (h_>, h, h, h)$ into the following pieces, 
\begin{equation*}
M_F (h_>, h_<, h_<, h_<) + \sum_{j_2, j_3, j_4} M_F(h_>, h_{j_2}, h_{j_3}, h_{j_4}) =:A+B,
\end{equation*}where $h_{j_k}$ is either $h_>$ or $h_<$, but at least one is $h_>$. 
We further split $A$ into two terms,
\begin{equation*}
M_F (h_>, h_{<<}, h_<, h_<)+ M_F (h_>, h_\sim, h_<, h_<);
\end{equation*}
we estimate this term by using Lemma \ref{le-multilinear}, 
\begin{equation*} 
\begin{split}
A &\lesssim s^{-1/2} \|h_>\|_{L^2} \|h_{<<}\|_{L^2} \|h_<\|^2_{L^2}+ \|h_>\|_{L^2} \|h_\sim\|_{L^2} \|h_<\|^2_{L^2} \\
&\lesssim \|h_>\|_{L^2} \left( s^{-1/2} \|h_{<<}\|_{L^2} + \|h_\sim \|_{L^2} \right) \|h_<\|^2_{L^2}.
\end{split}
\end{equation*}
Since $\|f\|_{L^2}=1$, 
\begin{align*}
\|h_<\|_{L^2} &\le  e^{\mu s^4} \|f\|_{L^2} = e^{\mu s^4}, \\
\|h_{<<}\|_{L^2} &\le e^{\mu s^2}, \\
\|h_\sim \|_{L^2} &\le e^{\mu s^4} \|f_\sim\|_{L^2},
\end{align*}where $f_\sim $ is defined, $\widehat{f}_\sim = \widehat{f} 1_{s\le | \xi |\le s^2}$. Thus we have 
\begin{equation}\label{eq-30}
A \lesssim e^{3\mu s^4}\|h_>\|_{L^2} \left( s^{-1/2} e^{\mu s^2 -\mu s^4} + \|f_\sim \|_{L^2}\right).
\end{equation}
Similarly we estimate the term $B$. We split $B$ into two terms $B_1+ B_2$, where $B_1=\sum_{j_2, j_3, j_4} M_F (h_>, h_{j_2}, h_{j_3}, h_{j_4})$ containing exactly one $h_>$ in $\{h_{j_2}, h_{j_3}, h_{j_4}\}$, while $B_2=\sum_{j_2, j_3, j_4} M_F (h_>, h_{j_2}, h_{j_3}, h_{j_4}) $ containing two or more $h_>$. 

To estimate $B_1$, 
\begin{equation}\label{eq-31}
\begin{split}
B_1 & \lesssim e^{\mu s^4}\|h_>\|^2_{L^2} \|h_<\|_{L^2} \left( s^{-1/2} e^{\mu s^2 -\mu s^4}+ \|f_\sim \|_{L^2}\right) \\
&\lesssim e^{2\mu s^4}\|h_>\|^2_{L^2} \left( s^{-1/2} e^{\mu s^2 -\mu s^4}+ \|f_\sim \|_{L^2}\right).
\end{split}
\end{equation}
To estimate $B_2$, 
\begin{equation}\label{eq-32}
B_2 \lesssim \|h_>\|^3_{L^2} \|h_<\|_{L^2}+ \|h_>\|^4_{L^2}\lesssim e^{\mu s^4} \|h_>\|^3_{L^2}+ \|h_>\|^4_{L^2}.  
\end{equation}
Thus from \eqref{eq-30}, \eqref{eq-31} and \eqref{eq-32}, we obtain
\begin{align*}
 \|e^{F(\cdot)} \widehat{f}_{>}\|_{L^2}^2 
&\lesssim e^{3\mu s^4}\|h_>\|_{L^2} \left( s^{-1/2} e^{\mu s^2 -\mu s^4}+ \|f_\sim \|_{L^2}\right)  \\
&\qquad +  e^{2\mu s^4}\|h_>\|^2_{L^2} \left( s^{-1/2} e^{\mu s^2 -\mu s^4}+ \|f_\sim \|_{L^2}\right) \\
&\qquad +e^{\mu s^4} \|h_>\|^3_{L^2}+\|h_>\|^4_{L^2}. 
\end{align*}
Since $\lim_{s\to \infty}\|f_\sim \|_{L^2} =0$, we take $s$ sufficiently large and set $\mu = s^{-4}$, 
\begin{equation}\label{eq-33}
\omega \|e^{F(\cdot)} \widehat{f}_{>}\|_{L^2} \le o_1(1)  \|e^{F(\cdot)} \widehat{f}_{>}\|_{L^2} + C \|e^{F(\cdot)} \widehat{f}_{>}\|_{L^2}^2+C \|e^{F(\cdot)} \widehat{f}_{>}\|_{L^2}^3+ o_2(1), 
\end{equation}
which completes the proof of Lemma \ref{le-bootstrap}. 
\end{proof}
\begin{remark}
Clearly the choice of $\mu$ in the preceding lemma depends on the function $f$ itself.  
\end{remark}
Now we conclude that $f$ in Lemma \ref{le-bootstrap} gains certain regularity. 

\begin{proof}[Proof of Theorem \ref{thm-complex-analyticity}]
Let $f\in L^2$ and $f\neq 0$. We normalize $f$ such that $\|f\|_{L^2} =1$. In Lemma \ref{le-bootstrap}, we choose $s$ sufficiently large such that $o_1(1) \le  {\omega}/{2}$ and $o_2(1) \le M/2$, where $M = \sup\{ G(x): \, x\in [0,\infty)\}$, and 
\begin{equation}
G(x) := \frac \omega 2 x-C x^2-Cx^3, \, x\in [0,\infty),
\end{equation}
and $C$ is the same constant as in \eqref{eq-27}. It is easy to see that $0\le M<\infty$.  Then $G (x) \le M$ for all $x\in [0,\infty)$ by Lemma \ref{le-bootstrap}.  Also the function $G$ is continuous on $[0,\infty)$. On the other hand, $G'' (x) <0$ for all $x\in (0,\infty)$; thus $G$ is concave. The line $G= \frac M2$ intersects at two points of the positive $x$ axis, $x=x_0$ and $x= x_1>0$. 

We define $H:(0,\infty) \to [0,\infty)$ via $$H(\epsilon) = \left(\int_{|\xi|\ge s^2} \left| e^{F_{s^{-4},\epsilon}(\xi)} \widehat{f}\right|^2 d\xi \right)^{1/2}.$$
The function $H$ is continuous on $(0,\infty)$ by the dominated convergence theorem and $H(0,\infty)$ is connected.  Hence $ G^{-1} ([0, \frac M2])$ is either contained in $[0,x_0]$ or contained in $[x_1,\infty)$; only one alternative holds. For $\epsilon =1$ and $s$ sufficiently large, $H(1) \ge x_1$ is impossible. Hence the first alternative holds.

Therefore $G^{-1}([0, \frac M2]) \subset [0,x_0]$, which yields that  
\begin{equation}\label{eq-35}
 \|e^{F(\cdot)} \hat{f}_{>}\|_{L^2}\le C_0. \text{ i.e., } \left\| e^{\frac {s^{-4}|\xi|^2}{1+\epsilon |\xi|^2}}\hat{f}_>\right\|_{L^2}\le C_0,
\end{equation} uniformly in all $\eps>0$. By the monotone convergence theorem, 
$$\|e^{s^{-4} |\xi|^2}\widehat{f}_{>} \|_{L^2} \le C_0<\infty.$$
It is clearly that $e^{s^{-4} |\xi|^2} \widehat{f}1_{|\xi|\le s^2} \in L^2.$ Therefore, 
$$ e^{s^{-4} |\xi|^2} \widehat{f} \in L^2. $$
Let $\mu = s^{-4}$. This proves the first half of Theorem \ref{thm-complex-analyticity}. 

To prove that $f$ can be extended to be complex analytic on $\mathbb{C}^2$, we observe that, by the Cauchy-Schwarz inequality, for any $\lambda \in \mathbb{R}$, 
\begin{equation}\label{eq-84}
e^{\lambda |\xi|}\widehat{f}(\xi)=e^{\lambda |\xi|-\mu|\xi|^2} e^{\mu |\xi|^2} \widehat{f}(\xi) \in L^2(\mathbb{R}^2). 
\end{equation}
So it is not hard to see that $f$ can be extended to be complex analytic on $\mathbb{C}^2$, see e.g. \cite[Theorem IX.13]{Reed-Simon:1975:book2}. Alternatively, analyticity can be obtained in the following way. Similarly as in \eqref{eq-84} for $k\in \mathbb{N}\cup \{0\}$,
$ |\xi|^ke^{\lambda |\xi|}\widehat{f} \in L^1(\mathbb{R}^2)$. For $z\in \mathbb{C}^2$, we choose $\lambda >|z|$, 
$$f(z) = (2\pi)^{-2} \int_{\mathbb{R}^2} e^{iz\cdot \xi -\lambda |\xi|} e^{\lambda |\xi|} \widehat{f} (\xi)d\xi. $$
Then by taking differentiation under the integral sign, complex analyticity follows.   
\end{proof}


\begin{thebibliography}{1}

\bibitem{Bourgain:1998:refined-Strichartz-NLS}
J.~Bourgain.
\newblock Refinements of {S}trichartz' inequality and applications to
              {$2$}{D}-{NLS} with critical nonlinearity.
\newblock {\em Internat. Math. Res. Notices (IMRN)}, Vol. (5): 253--283, 1998.

\bibitem {Charalambides:2012:Cauchy-Pexider} 
M.~Charalambides, 
\newblock On Restricting Cauchy-Pexider Equations to Submanifolds. 
\newblock {Aequationes Math.} 86: 231-253,2013.


\bibitem {Christ-Shao:extremal-for-sphere-restriction-I-existence} M.~ Christ and S.~Shao, \newblock Existence of extremals for a {F}ourier restriction inequality.
\newblock{ \em Analysis and PDE. } 5(2):  261Ð312, 2012.

\bibitem {Christ-Shao:extremal-for-sphere-restriction-II-characterizations} 
M.~ Christ and S.~Shao, 
\newblock On the extremisers of an adjoint {F}ourier restriction inequality.
\newblock {\em Advances in Mathematics.} 230(2): 957-977, 2012.


\bibitem{Erdogan-Hundertmark-Lee:2011:exponential-decay-of-dispersion-management-soliton}
B. Erdo{\u{g}}an, D.~ Hundertmark and Y.~R. Lee.
\newblock Exponential decay of dispersion managed solitons for vanishing
              average dispersion.
\newblock {\em Math. Res. Lett.}, 18(1): 11--24, 2011.

\bibitem{Evans:2010:PDE-book}
L.~Evans.
\newblock Partial differential equations.
\newblock {\em Graduate Studies in Mathematics 19}, American Mathematical Society, Providence, RI.

\bibitem{Foschi:2007:maxi-strichartz-2d}
D.~Foschi.
\newblock Maximizers for the {S}trichartz inequality.
\newblock {\em J. Eur. Math. Soc. (JEMS)}, 9(4): 739--774, 2007.

\bibitem{Hundertmark-Lee:2009:decay-smoothness-for-solutions-dispersion-managed-NLS}
D.~Hundertmark and Y.~R. Lee.
\newblock Decay estimates and smoothness for solutions of the dispersion
  managed non-linear {S}chr\"odinger equation.
\newblock {\em Comm. Math. Phys.}, 286(3): 851--873, 2009.


\bibitem{Hundertmark-Lee:2012:nonlocal-variational-problems-nonlinear-optics}
D.~Hundertmark and Y.~R. ~Lee.
\newblock On non-local variational problems with lack of compactness
              related to non-linear optics.
\newblock {\em J. Nonlinear Sci.}, 22(1): 1--38, 2012.

\bibitem{Hundertmark-Zharnitsky:2006:maximizers-Strichartz-low-dimensions}
D.~Hundertmark and V.~Zharnitsky.
\newblock On sharp {S}trichartz inequalities in low dimensions.
\newblock {\em Int. Math. Res. Not.}, pages Art. ID 34080, 18, 2006.

\bibitem{Hundertmark-Shao:Analyticity-of-extremals-Airy-Strichartz}
D.~Hundertmark and S.~Shao.
\newblock Analyticity of extremals to the {A}iry-{S}trichartz inequality.
\newblock {\em Bull. London Math. Soc.}, 44(2): 336-352, 2012. 


\bibitem{Keel-Tao:1998:endpoint-strichartz}
M. ~Keel, and T. ~Tao. 
\newblock Endpoint {S}trichartz estimates
\newblock{\em Amer. J. Math.}, 120(5): 955--980, 1998.


\bibitem{Krantz:1992:complex-analysis-several-variables}
S.~G.~Krantz.
\newblock Function theory of several complex variables (second edition)
\newblock {\em Wadsworth \& Brooks/Cole Advanced Books \& Software}, Pacific
             Grove, CA, 1992.
             
\bibitem{Kunze:2003:maxi-strichartz-1d}
M.~Kunze.
\newblock On the existence of a maximizer for the {S}trichartz inequality.
\newblock {\em Comm. Math. Phys.}, 243(1): 137--162, 2003.
 
 
\bibitem{Reed-Simon:1975:book2}
M.~Reed and B.~Simon.
\newblock Methods of modern mathematical physics. {II}. {F}ourier
              analysis, self-adjointness
\newblock {\em Academic Press [Harcourt Brace Jovanovich, Publishers]}, New
              York-London, 1975.
              
\bibitem{Shao:2009:maximizers-schrodinger}
S.~Shao.
\newblock Maximizers for the {S}trichartz and the {S}obolev-{S}trichartz
              inequalities for the {S}chr\"odinger equation.
\newblock {\em Electron. J. Differential Equations}, 3: 1-13, 2009.


\bibitem{Tao:2006-CBMS-book}
T. ~Tao. 
\newblock{{N}onlinear dispersive equations: local and global analysis. }
\newblock{\em CBMS Regional Conference series in Mathematics}, Volume 106, 2006.

\end{thebibliography}
\end{document}